\theoremstyle{plain}
\newtheorem{thm}{Theorem}
\newtheorem{lem}{Lemma}
\theoremstyle{remark}
\newtheorem{rem}{Remark}
\DeclareMathOperator{\td}{d}
\DeclareMathOperator{\te}{e}
\begin{document}
\title[Decreasing and complete monotonicity of two functions]
{Decreasing and complete monotonicity of two functions defined by three derivatives of a completely monotonic function involving the trigamma function}

\author[H.-P. Yin]{Hong-Ping Yin}
\address{College of Mathematical Science, Inner Mongolia Minzu University, Tongliao, Inner Mongolia, 028043, China}
\email{\href{mailto: H.-P. Yin <hongpingyin@qq.com>}{hongpingyin@qq.com}}
\urladdr{\url{https://orcid.org/0000-0001-7481-0194}}

\author[L.-X. Han]{Ling-Xiong Han}
\address{College of Mathematical Science, Inner Mongolia Minzu University, Tongliao, Inner Mongolia, 028043, China}
\email{\href{mailto: L.X. Han <hlx2980@163.com>}{hlx2980@163.com}, \href{mailto: L.X. Han <hanlingxiong@outlook.com>}{hanlingxiong@outlook.com}}
\urladdr{\url{https://orcid.org/0000-0003-1346-9943}}

\author[F. Qi]{Feng Qi*}
\address{School of Mathematics and Physics, Hulunbuir University, Hulunbuir, Inner Mongolia, 021008, China;
School of Mathematics and Informatics, Henan Polytechnic University, Jiaozuo, Henan, 454010, China;
Independent researcher, University Village, Dallas, TX 75252, USA}
\email{\href{mailto: F. Qi <qifeng618@gmail.com>}{qifeng618@gmail.com}}
\urladdr{\url{https://orcid.org/0000-0001-6239-2968}}


\begin{abstract}
In the paper, by convolution theorem of the Laplace transforms, a monotonicity rule for the ratio of two Laplace transforms, Bernstein's theorem for completely monotonic functions, and other analytic techniques, the authors verify decreasing property of a ratio between three derivatives of a function involving trigamma function and find necessary and sufficient conditions for a function defined by three derivatives of a function involving trigamma function to be completely monotonic. These results confirm previous guesses posed by Qi and generalize corresponding known conclusions.
\end{abstract}

\keywords{decreasing monotonicity; complete monotonicity; completely monotonic function; trigamma function; derivative; ratio; convolution theorem; inequality; monotonicity rule; Laplace transform; Bernstein's theorem; exponential function; guess}

\subjclass{Primary 33B15; Secondary 26A48, 26D07, 33B10, 44A10, 44A35}

\thanks{*Corresponding author}

\thanks{This paper was typeset using \AmS-\LaTeX}

\maketitle

\section{Introduction}
In the literature~\cite[Section~6.4]{abram}, the function
\begin{equation*}
\Gamma(z)=\int_0^{\infty}t^{z-1}\te^{-t}\td t, \quad \Re(z)>0
\end{equation*}
and its logarithmic derivative
$
\psi(z)=[\ln\Gamma(z)]'=\frac{\Gamma'(z)}{\Gamma(z)}
$
are called Euler's gamma function and digamma function respectively. Further, the functions $\psi'(z)$, $\psi''(z)$, $\psi'''(z)$, and $\psi^{(4)}(z)$ are known as the trigamma, tetragamma, pentagamma, and hexagamma functions respectively. All the derivatives $\psi^{(k)}(z)$ for $k\ge0$ are known as polygamma functions.
\par
Recall from Chapter~XIII in~\cite{mpf-1993}, Chapter~1 in~\cite{Schilling-Song-Vondracek-2nd}, and Chapter~IV in~\cite{widder-1941} that, if a function $f(t)$ on an interval $I$ has derivatives of all orders on $I$ and satisfies inequalities $(-1)^{n}f^{(n)}(t)\ge0$ for $t\in I$ and $n\in\{0\}\cup\mathbb{N}$, then we call $f(t)$ a completely monotonic function on $I$. There have been plenty of literature dedicating to the study and applications of completely monotonic functions, logarithmically completely monotonic functions, and completely monotonic degrees. For better motivation of this paper, we would like to recommend three papers~\cite{MR3863624, MR3856139, MR3759696} in 2018, three papers~\cite{MR3961386, MR3952626, MR3997143} in 2019, three papers~\cite{MR4201158, MR4179998, MR4146102} in 2020, three papers~\cite{MR4256259, MR4252699, MR4139119} in 2021, three papers~\cite{MR4403152, MR4376815, MR4366203} in 2022, three papers~\cite{MR4601980, MR4601795, MR4551312} in 2023, and the newly-published expository, survey, and review article~\cite{mathematics-2651178-for-proof.tex} in 2024.
\par
Let $\Phi(x)=x\psi'(x)-1=x\bigl[\psi'(x)-\frac{1}{x}\bigr]$ on $(0,\infty)$. Lemma~2 in~\cite{Alice-AADM-3137.tex} reads that the function $(-1)^k\Phi^{(k)}(x)$ for $k\ge0$ is completely monotonic on $(0,\infty)$.
\par
In~\cite[Theorem~4]{Alice-SPJM.tex} and~\cite[Theorem~4.1]{Alice-Schur-Mon.tex}, Qi turned out the following necessary and sufficient conditions and double inequality:
\begin{enumerate}
\item
if and only if $\alpha\ge2$, the function $\mathfrak{H}_\alpha(x)=\Phi'(x)+\alpha\Phi^2(x)$ is completely monotonic on $(0,\infty)$;
\item
if and only if $\alpha\le1$, the function $-\mathfrak{H}_\alpha(x)$ is completely monotonic on $(0,\infty)$;
\item
the double inequality $-2<\frac{\Phi'(x)}{\Phi^2(x)}<-1$ is valid and sharp in the sense that the lower and upper bounds $-2$ and $-1$ cannot be replaced by any bigger and smaller ones respectively.
\end{enumerate}
In~\cite[Theorem~1.1]{Alice-Schur-Mon.tex}, Qi found the following necessary and sufficient conditions and limits:
\begin{enumerate}
\item
if and only if $\beta\ge2$, the function $H_\beta(x)=\frac{\Phi'(x)}{\Phi^\beta(x)}$ is decreasing on $(0,\infty)$, with the limits
\begin{equation*}
\lim_{x\to0^+}H_\beta(x)=
\begin{cases}
-1, & \beta=2\\
0, &\beta>2
\end{cases}
\quad\text{and}\quad
\lim_{x\to\infty}H_\beta(x)=
\begin{cases}
-2, &\beta=2\\
-\infty, &\beta>2;
\end{cases}
\end{equation*}
\item
if $\beta\le1$, the function $H_\beta(x)$ is increasing on $(0,\infty)$, with the limits
\begin{equation*}
H_\beta(x)\to
\begin{cases}
-\infty, & x\to0^+\\
0, &x\to\infty.
\end{cases}
\end{equation*}
\end{enumerate}
For $k\in\{0\}\cup\mathbb{N}$ and $\lambda_k,\mu_k\in\mathbb{R}$, let
\begin{equation*}
\mathfrak{J}_{k,\lambda_k}(x)=\Phi^{(2k+1)}(x)+\lambda_k\bigl[\Phi^{(k)}(x)\bigr]^2
\quad\text{and}\quad
J_{k,\mu_k}(x)=\frac{\Phi^{(2k+1)}(x)}{\bigl[(-1)^k\Phi^{(k)}(x)\bigr]^{\mu_k}}
\end{equation*}
on $(0,\infty)$.
In~\cite[Theorems~1 and~2]{Alice-AADM-3137.tex}, Qi presented the following necessary and sufficient conditions, limits, and double inequality:
\begin{enumerate}
\item
if and only if $\lambda_k\ge\frac{(2k+2)!}{k!(k+1)!}$, the function $\mathfrak{J}_{k,\lambda_k}(x)$ is completely monotonic on $(0,\infty)$;
\item
if and only if $\lambda_k\le\frac{1}{2}\frac{(2k+2)!}{k!(k+1)!}$, the function $-\mathfrak{J}_{k,\lambda_k}(x)$ is completely monotonic on $(0,\infty)$;
\item
if and only if $\mu_k\ge2$, the function $J_{k,\mu_k}(x)$ is decreasing on $(0,\infty)$, with the limits
\begin{equation*}
\lim_{x\to0^+}J_{k,\mu_k}(x)=
\begin{dcases}
-\frac{1}{2}\frac{(2k+2)!}{k!(k+1)!}, & \mu_k=2\\
0, &\mu_k>2
\end{dcases}
\end{equation*}
and
\begin{equation*}
\lim_{x\to\infty}J_{k,\mu_k}(x)=
\begin{dcases}
-\frac{(2k+2)!}{k!(k+1)!}, &\mu_k=2\\
-\infty, &\mu_k>2;
\end{dcases}
\end{equation*}
\item
if $\mu_k\le1$, the function $J_{k,\mu_k}(x)$ is increasing on $(0,\infty)$, with the limits
\begin{equation*}
J_{k,\mu_k}(x)\to
\begin{cases}
-\infty, & x\to0^+\\
0, &x\to\infty;
\end{cases}
\end{equation*}
\item
the double inequality
\begin{equation*}
-\frac{(2k+2)!}{k!(k+1)!}<\frac{\Phi^{(2k+1)}(x)}{\bigl[\Phi^{(k)}(x)\bigr]^2}<-\frac{1}{2}\frac{(2k+2)!}{k!(k+1)!}
\end{equation*}
is valid on $(0,\infty)$ and sharp in the sense that the lower and upper bounds cannot be replaced by any larger and smaller numbers respectively.
\end{enumerate}
For $k\ge m\ge0$, let
\begin{equation*}
\mathcal{J}_{k,m}(x)=\frac{\Phi^{(2k+2)}(x)}{\Phi^{(k-m)}(x)\Phi^{(k+m+1)}(x)}
\end{equation*}
on $(0,\infty)$. In~\cite[Remark~3]{Alice-AADM-3137.tex}, Qi guessed that the function $\mathcal{J}_{k,m}(x)$ for $k\ge m\ge0$ should be decreasing on $(0,\infty)$ and that the double inequality
\begin{equation}\label{J(0-m-x)-ineq-doub}
-\frac{2(2k+2)!}{k!(k+1)!}<\mathcal{J}_{k,0}(x)<-\frac{(2k+2)!}{k!(k+1)!}
\end{equation}
for $k\ge0$ should be valid on $(0,\infty)$ and sharp in the sense that the lower and upper bounds cannot be replaced by any larger and smaller numbers respectively.
\par
For $m,n\in\{0\}\cup\mathbb{N}$ and $\omega_{m,n}\in\mathbb{R}$, let
\begin{equation}\label{Y(m-n)-dfn}
Y_{m,n}(x)=\frac{\Phi^{(m+n+1)}(x)}{\Phi^{(m)}(x)\Phi^{(n)}(x)}
\end{equation}
and
\begin{equation}\label{Y(m-n)-omega-dfn}
\mathcal{Y}_{m,n;\omega_{m,n}}(x)=\Phi^{(m+n+1)}(x)+\omega_{m,n}\Phi^{(m)}(x)\Phi^{(n)}(x).
\end{equation}
It is clear that
\begin{gather*}
Y_{m,n}(x)=Y_{n,m}(x), \quad \mathcal{Y}_{m,n;\omega_{m,n}}(x)=\mathcal{Y}_{n,m;\omega_{n,m}}(x), \\
Y_{k-m,k+m+1}(x)=\mathcal{J}_{k,m}(x), \quad \mathcal{Y}_{k,k;\omega_{k,k}}(x)=\mathfrak{J}_{k,\omega_{k,k}}(x),\quad
\mathcal{Y}_{0,0;\omega_{0,0}}(x)=\mathfrak{H}_{\omega_{0,0}}(x).
\end{gather*}
In this paper, we will prove decreasing property of the function $Y_{m,n}(x)$ and find necessary and sufficient conditions on $\omega_{m,n}$ for $\pm(-1)^{m+n+1}\mathcal{Y}_{m,n;\omega_{m,n}}(x)$ to be completely monotonic on $(0,\infty)$. These results confirm the above guesses and generalize corresponding ones in~\cite{Alice-SPJM.tex, Alice-Schur-Mon.tex, Alice-AADM-3137.tex} mentioned above.

\section{Lemmas}

The following lemmas are necessary in this paper.

\begin{lem}[{Convolution theorem for the Laplace transforms~\cite[pp.~91--92]{widder-1941}}]\label{convlotion-thm}
Let $f_k(t)$ for $k=1,2$ be piecewise continuous in arbitrary finite intervals included in $(0,\infty)$. If there exist some constants $M_k>0$ and $c_k\ge0$ such that $|f_k(t)|\le M_k \te^{c_kt}$ for $k=1,2$, then
\begin{equation*}
\int_0^\infty\bigg[\int_0^tf_1(u)f_2(t-u)\td u\bigg]\te^{-st}\td t =\int_0^\infty
f_1(u)\te^{-su}\td u\int_0^\infty f_2(v)\te^{-sv}\td v.
\end{equation*}
\end{lem}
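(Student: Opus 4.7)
The plan is to prove the identity by the classical route: expand the right-hand product into a double integral over the first quadrant, justify exchanging the order of integration by Fubini's theorem using the assumed exponential bounds, and then pass to convolution coordinates by the substitution $t=u+v$.

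First, I would fix $s>\max\{c_1,c_2\}$ so that each one-dimensional Laplace integral converges absolutely, and write
\begin{equation*}
\int_0^\infty f_1(u)\te^{-su}\td u\int_0^\infty f_2(v)\te^{-sv}\td v=\iint_{(0,\infty)^2}f_1(u)f_2(v)\te^{-s(u+v)}\td u\,\td v.
\end{equation*}
The estimate $|f_1(u)f_2(v)\te^{-s(u+v)}|\le M_1M_2\te^{-(s-c_1)u}\te^{-(s-c_2)v}$ provides an integrable majorant on $(0,\infty)^2$, so Fubini--Tonelli applies and the double integral may be freely written as an iterated integral in either order.

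Next I would perform the affine substitution $t=u+v$ while retaining $u$; this sends $\{(u,v):u>0,\,v>0\}$ bijectively (with Jacobian $1$) to $\{(t,u):0<u<t<\infty\}$, turning the integrand into $f_1(u)f_2(t-u)\te^{-st}$. Integrating with $t$ outside and $u\in(0,t)$ inside then yields
\begin{equation*}
\int_0^\infty\biggl[\int_0^t f_1(u)f_2(t-u)\td u\biggr]\te^{-st}\td t,
\end{equation*}
which is precisely the left-hand side.

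The only technical point requiring any care is the Fubini step, and the hypothesized exponential majorants dispatch it immediately; piecewise continuity on bounded subintervals of $(0,\infty)$ provides the necessary measurability. Consequently there is no genuine obstacle in the argument, and the proof reduces to Fubini's theorem combined with a linear change of variables.
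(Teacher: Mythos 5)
Your argument is correct and complete: for $s>\max\{c_1,c_2\}$ the exponential bounds give an integrable majorant on $(0,\infty)^2$, Fubini--Tonelli applies (piecewise continuity supplying measurability), and the unimodular change of variables $t=u+v$ converts the product of transforms into the transform of the convolution. The paper itself offers no proof of this lemma --- it is quoted verbatim from Widder's book --- so there is nothing internal to compare against; your proof is essentially the standard textbook derivation of that cited result. The only cosmetic remark is that the lemma as stated suppresses the restriction on $s$, and your explicit choice $s>\max\{c_1,c_2\}$ is exactly the half-line on which the asserted identity is guaranteed by the hypotheses.
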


\begin{lem}[{\cite[Lemma~4]{Yang-Tian-JIA-2017}}]\label{Laplace-Ratio-Mon}
Let the functions $A(t)$ and $B(t)\ne0$ be defined on $(0,\infty)$ such that their Laplace transforms $\int_{0}^{\infty}A(t)\te^{-xt}\td t$ and $\int_{0}^{\infty}B(t)\te^{-xt}\td t$ exist. If the ratio $\frac{A(t)}{B(t)}$ is increasing, then the ratio $\frac{\int_{0}^{\infty}A(t)\te^{-xt}\td t}{\int_{0}^{\infty}B(t)\te^{-xt}\td t}$ is decreasing on $(0,\infty)$.
\end{lem}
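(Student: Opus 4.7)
The plan is to differentiate the ratio directly and rewrite the numerator of the derivative as a symmetric double integral whose sign is forced by the monotonicity of $A/B$. Set
\begin{equation*}
N(x)=\int_0^\infty A(t)\te^{-xt}\td t, \qquad D(x)=\int_0^\infty B(t)\te^{-xt}\td t.
\end{equation*}
Since a Laplace transform is analytic on the interior of its half-plane of convergence, term-by-term differentiation under the integral yields $N'(x)=-\int_0^\infty tA(t)\te^{-xt}\td t$ and the analogous formula for $D'(x)$. Hence the sign of $(N/D)'(x)$ coincides with the sign of $N'(x)D(x)-N(x)D'(x)$.

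Next I would expand that quantity as a double integral over $(0,\infty)^2$ by Fubini's theorem, with absolute convergence justified by the exponential bounds $|f_k(t)|\le M_k\te^{c_kt}$ that underlie Lemma~\ref{convlotion-thm}. A short computation gives
\begin{equation*}
N'(x)D(x)-N(x)D'(x)=\int_0^\infty\!\!\int_0^\infty A(t)B(s)(s-t)\te^{-x(s+t)}\td s\,\td t.
\end{equation*}
Interchanging the dummy variables $s\leftrightarrow t$ and averaging the two expressions kills the antisymmetric part and produces the symmetric form
\begin{equation*}
2\bigl[N'(x)D(x)-N(x)D'(x)\bigr]=\int_0^\infty\!\!\int_0^\infty (t-s)\bigl[A(s)B(t)-A(t)B(s)\bigr]\te^{-x(s+t)}\td s\,\td t.
\end{equation*}

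The final step is to read off the sign. Factoring
\begin{equation*}
A(s)B(t)-A(t)B(s)=B(s)B(t)\biggl[\frac{A(s)}{B(s)}-\frac{A(t)}{B(t)}\biggr],
\end{equation*}
the bracketed term has the same sign as $s-t$ because $A/B$ is increasing, while $B(s)B(t)>0$ provided $B$ is of constant sign on $(0,\infty)$. The full integrand is then $\le 0$ pointwise, so the double integral is $\le 0$, and we conclude $(N/D)'(x)\le 0$.

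The only real subtlety I foresee is that the hypothesis ``$B(t)\ne 0$'' must tacitly include that $B$ is of constant sign on $(0,\infty)$; otherwise the factor $B(s)B(t)$ can be negative on a set of positive measure and the sign argument collapses. In every application of this lemma in the present paper, $B(t)$ will be manifestly one-signed, so this is a clarifying remark rather than a genuine obstacle, and the proof then reduces to Fubini plus the single symmetrization step.
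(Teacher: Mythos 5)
The paper itself offers no proof of this lemma: it is imported verbatim from \cite[Lemma~4]{Yang-Tian-JIA-2017}, so there is no internal argument to compare yours against. Your proof is correct and is, in substance, the standard derivation of this monotonicity rule: differentiation under the integral sign (legitimate because a Laplace transform is analytic in the interior of its half-plane of convergence), Fubini, symmetrization in $(s,t)$, and the Chebyshev-type observation that
\begin{equation*}
(t-s)\bigl[A(s)B(t)-A(t)B(s)\bigr]
=-(t-s)^2\,B(s)B(t)\,\frac{\frac{A(t)}{B(t)}-\frac{A(s)}{B(s)}}{t-s}\le0
\end{equation*}
when $\frac{A}{B}$ is increasing and $B$ keeps a constant sign. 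Your closing caveat is the one substantive point: as literally stated, the hypothesis $B(t)\ne0$ does not by itself force $B(s)B(t)>0$ unless one also assumes continuity of $B$ (or outright positivity, as in the source), and without that the sign argument genuinely fails. This costs nothing in the present paper, since the only denominator to which the lemma is applied is $\int_0^t u^m(t-u)^n h(u)h(t-u)\,\td u$ with $h>0$, which is manifestly positive. The remaining hygiene item, which you flag and which does need the exponential growth bounds of the kind appearing in Lemma~\ref{convlotion-thm}, is the absolute convergence of $\int_0^\infty t\,|A(t)|\te^{-xt}\td t$ and of the double integral for each fixed $x$, so that both the differentiation under the integral and the application of Fubini are justified. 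With those two remarks made explicit, your proof is complete and self-contained, which is arguably an improvement over merely citing the lemma.
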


\begin{lem}\label{F(xy)-2-power-lem}
Let $x,y\in\mathbb{R}$ such that $0<2x<y$.
\begin{enumerate}
\item
When $y>2x>2\bigl(2+\frac{1}{\ln2}\bigr)=6.885390\dotsc$, the function
\begin{equation*}
F(x,y)=2\biggl(\frac{1}{x}-\frac{1}{y-x}\biggr) +\frac{1}{2}\biggl(\frac{2^{y-x}}{y-x}-\frac{2^x}{x}\biggr) -\frac{2^{y-x}-2^{x}}{(y-x)x}
\end{equation*}
is positive.
\item
For $k,m\in\mathbb{N}$ such that $6\le2m<k$, the sequence $F(m,k)$ is positive.
\end{enumerate}
\end{lem}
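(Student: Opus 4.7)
The plan is to reduce the lemma to a statement about a one-variable function via the substitution $a = x$, $b = y - x$, so that the hypothesis $0 < 2x < y$ becomes simply $0 < a < b$. Multiplying $F(x, y)$ by the positive factor $2ab$ clears the denominators and produces
\begin{equation*}
g(b) := 2ab\, F(x, y) = 4(b - a) + (a - 2)\, 2^b - (b - 2)\, 2^a,
\end{equation*}
which satisfies $g(a) = 0$ by inspection. Since $2ab > 0$, the task reduces to showing $g(b) > 0$ for $b > a$.

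For part~(1), the assumption $a > 2 + \frac{1}{\ln 2} > 2$ is tailored for a convexity argument. Differentiating $g$ twice in $b$ with $a$ held fixed gives $g''(b) = (a - 2)(\ln 2)^2\, 2^b > 0$, so $g$ is strictly convex on $[a, \infty)$. Combined with $g(a) = 0$, the tangent-line inequality yields $g(b) \geq g'(a)(b - a)$ for $b \geq a$, where
\begin{equation*}
g'(a) = 4 + 2^a \bigl[(a - 2)\ln 2 - 1\bigr].
\end{equation*}
The hypothesis $a > 2 + \frac{1}{\ln 2}$ makes the bracket strictly positive, so $g'(a) > 4$, and therefore $g(b) > 4(b - a) > 0$ for $b > a$; this finishes part~(1).

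For part~(2), write $a = m$ and $b = k - m$ with integer $m \geq 3$ and $b \geq m + 1 \geq 4$. When $m \geq 4$ one has $m > 2 + \frac{1}{\ln 2} \approx 3.442$, so part~(1) applies directly. The lone integer value $m = 3$ falls just below the hypothesis of part~(1), but the same convexity argument still works: $g''(b) > 0$ continues to hold (as $3 > 2$), and the one-line evaluation $g'(3) = 4 + 8(\ln 2 - 1) = 8\ln 2 - 4 > 0$ gives $g(b) > 0$ for every real $b > 3$, in particular for every integer $b \geq 4$. The only delicate point in the whole argument is ensuring $g'(a) > 0$: the threshold $a > 2 + \frac{1}{\ln 2}$ in part~(1) is calibrated precisely for this, and in part~(2) the single boundary case $m = 3$ is dispatched by the numerical check above.
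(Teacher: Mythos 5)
Your proof is correct, and while it starts from the same algebraic identity as the paper (multiplying by $2x(y-x)$ produces exactly the numerator $4(b-a)+(a-2)2^{b}-(b-2)2^{a}$ that the paper writes as $2(y-2x)+2^{x-1}[2-y+x+(x-2)2^{y-2x}]$, up to a factor of $2$), the way you establish positivity is genuinely different. The paper discards the positive term $2(y-2x)$, reduces the remaining bracket to the scalar inequality $2^{t}>1+\frac{t}{x-2}$ with $t=y-2x$, and invokes the monotonicity of $\frac{t}{2^{t}-1}$ to obtain the threshold $x>2+\frac{1}{\ln 2}$; since $m=3$ falls below that threshold, the paper must treat it separately via the closed form $F(3,k)=\frac{2^{5}[2^{k-5}-(k-4)]}{48(k-3)}$. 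You instead keep the whole expression $g(b)$, observe $g(a)=0$, and use strict convexity of $g$ in $b$ (valid for every $a>2$) together with the tangent-line inequality, so that positivity reduces to the single evaluation $g'(a)=4+2^{a}[(a-2)\ln 2-1]$. This buys you a uniform treatment: the same two-line argument covers part~(1) and, via the check $g'(3)=8\ln 2-4>0$, also the boundary case $m=3$ of part~(2) — in fact for all real $b>3$, which is slightly stronger than the paper's integer-only claim. The paper's route, in exchange, isolates the clean inequality $2^{t}>1+\frac{t}{x-2}$ and makes visible exactly why the constant $2+\frac{1}{\ln 2}$ is the natural threshold. All of your computations ($g(a)=0$, $g''(b)=(a-2)(\ln 2)^{2}2^{b}$, $g'(a)$, and the numerical check for $m=3$) verify correctly.
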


\begin{proof}
The function $F(x,y)$ can be rearranged as
\begin{equation*}
F(x,y)=\frac{2(y-2x)+2^{x-1}\bigl[2-y+x+(x-2)2^{y-2x}\bigr]}{x(y-x)}.
\end{equation*}
Therefore, it suffices to prove $2-y+x+(x-2)2^{y-2x}>0$, that is,
\begin{equation}\label{equiv-ineq-2power}
2^{y-2x}>\frac{y-x-2}{x-2}.
\end{equation}
Replacing $y-2x$ by $t$ in~\eqref{equiv-ineq-2power} leads to
\begin{equation}\label{equiv-in-per}
2^t>\frac{t+x-2}{x-2}=1+\frac{t}{x-2}
\end{equation}
for $t>0$ and $x>2$. The inequality~\eqref{equiv-in-per} can be reformulated as $x>2+\frac{t}{2^t-1}$.
Since the function $\frac{t}{2^t-1}$ is decreasing from $(0,\infty)$ onto $\bigl(0,\frac{1}{\ln2}\bigr)$, it is sufficient for $x>2+\frac{1}{\ln2}=3.442695\dotsc$.
\par
Repeating those arguments before the inequality~\eqref{equiv-ineq-2power} hints us that, for proving $F(m,k)>0$, it is sufficient to show
\begin{equation*}
2^{k-2m}>\frac{k-m-2}{m-2}=1+\frac{k-2m}{m-2}
\end{equation*}
which can be rewritten as
\begin{equation}\label{equ-ineq-k-m-er}
\frac{k-2m}{2^{k-2m}-1}<m-2
\end{equation}
Since $\frac{t}{2^t-1}$ is decreasing in $t\in(0,\infty)$ and $k-2m\ge1$, the largest value of the left hand side in the inequality~\eqref{equ-ineq-k-m-er} is $\frac{1}{2^1-1}=1$ which means that the strict inequality~\eqref{equ-ineq-k-m-er} is valid for all $m\ge4$.
As a result, the sequence $F(m,k)$ is positive for all $m\ge4$.
\par
When $m=3$, the sequence $F(3,k)$ is
\begin{equation*}
F(3,k)=\frac{2^k-32k+128}{48(k-3)}
=\frac{2^5\bigl[2^{k-5}-(k-4)\bigr]}{48(k-3)}
\end{equation*}
which is positive for all $k>2\cdot3=6$.
The proof of Lemma~\ref{F(xy)-2-power-lem} is complete.
\end{proof}

\begin{lem}\label{h(t)-ratio-mon-lem}
Let
\begin{equation*}
h(t)=\begin{dcases}
\frac{\te^t(\te^t-1-t)}{(\te^t-1)^2}, & t\ne0\\
\frac{1}{2}, & t=0
\end{dcases}
\end{equation*}
on $(-\infty,\infty)$. Then, for any fixed $s\in(0,1)$, the ratio $\frac{h(st)}{h^s(t)}$ is increasing in $t$ from $(0,\infty)$ onto $\bigl(\frac{1}{2^{1-s}},1\bigr)$.
\end{lem}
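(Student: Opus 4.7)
The plan is to reduce the monotonicity of $h(st)/h^s(t)$ to the log-concavity of $h$ on $(0,\infty)$. Setting $\phi(t)=h'(t)/h(t)=(\ln h)'(t)$, a direct logarithmic differentiation gives
\begin{equation*}
\frac{\td}{\td t}\ln\frac{h(st)}{h^s(t)}=s\bigl[\phi(st)-\phi(t)\bigr].
\end{equation*}
Since $0<s<1$ forces $st<t$, the claimed strict increase of $h(st)/h^s(t)$ will follow once $\phi$ is shown to be strictly decreasing on $(0,\infty)$, equivalently, once $H(t):=\ln h(t)$ is strictly concave there.

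For the concavity step I would compute from $H(t)=t+\ln(\te^t-1-t)-2\ln(\te^t-1)$ that
\begin{equation*}
H''(t)=\frac{\te^t(1-t)-1}{(\te^t-1-t)^2}+\frac{2\te^t}{(\te^t-1)^2},
\end{equation*}
and then combine the two fractions. On $(0,\infty)$ the common denominator $(\te^t-1-t)^2(\te^t-1)^2$ is strictly positive, so the sign of $H''$ agrees with that of
\begin{equation*}
N(t)=(3-t)\te^{3t}-(2t+7)\te^{2t}+(2t^2+3t+5)\te^t-1.
\end{equation*}
The task is therefore to verify $N(t)<0$ for all $t>0$.

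This is the main technical obstacle: $N(0)=0$ and, in fact, several successive derivatives of $N$ vanish at the origin, so a one-step sign argument cannot work. My strategy is an iterated derivative reduction. Writing $N'(t)=\te^t M(t)$ with
\begin{equation*}
M(t)=(8-3t)\te^{2t}-(4t+16)\te^t+2t^2+7t+8,
\end{equation*}
one checks $M(0)=M'(0)=M''(0)=0$, and then $M'''(t)=4\te^t L(t)$ with
\begin{equation*}
L(t)=(7-6t)\te^t-(t+7).
\end{equation*}
For $L$ one finds $L(0)=L'(0)=0$ and $L''(t)=-(5+6t)\te^t<0$ on $(0,\infty)$. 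Unwinding the vanishing initial conditions one step at a time: $L''<0$ forces $L'<0$ on $(0,\infty)$, hence $L<0$; consequently $M'''<0$, then $M''<0$, $M'<0$, $M<0$; therefore $N'<0$ on $(0,\infty)$, and finally $N<0$ on $(0,\infty)$. This gives $H''<0$, so $\phi$ is strictly decreasing as required.

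Having established the strict decrease of $\phi$, the opening identity yields that $h(st)/h^s(t)$ is strictly increasing on $(0,\infty)$. The two endpoint values I would read off from the expansions $h(t)=\frac{1}{2}+\frac{t}{6}+O(t^3)$ as $t\to 0^+$, whence
\begin{equation*}
\lim_{t\to 0^+}\frac{h(st)}{h^s(t)}=\frac{1/2}{(1/2)^s}=\frac{1}{2^{1-s}},
\end{equation*}
and $h(t)=\bigl[1-(1+t)\te^{-t}\bigr]/(1-\te^{-t})^2\to 1$ as $t\to\infty$, which gives $\lim_{t\to\infty}h(st)/h^s(t)=1$. Combined with the monotonicity, this identifies the range as $\bigl(1/2^{1-s},1\bigr)$, completing the proof.
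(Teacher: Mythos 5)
Your argument is correct, and it takes a genuinely different route from the paper. You reduce the monotonicity of $\frac{h(st)}{h^s(t)}$ to the strict log-concavity of $h$ on $(0,\infty)$ via the identity $\frac{\td}{\td t}\ln\frac{h(st)}{h^s(t)}=s[(\ln h)'(st)-(\ln h)'(t)]$, and then establish $(\ln h)''<0$ by an elementary cascade of differentiations: I have checked that $N$, $M$, $L$ and their initial values are exactly as you state ($N(0)=M(0)=M'(0)=M''(0)=L(0)=L'(0)=0$ and $L''(t)=-(5+6t)\te^t<0$), so the sign propagates back to $N<0$ and hence $H''<0$; the limit computations are also right, since $N(t)\sim-\frac{t^6}{36}$ near the origin is consistent with $h$ being analytic there with $h(0)=\frac12$. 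The paper instead differentiates the ratio directly, expands the resulting expression into a power series in $t$ whose coefficients are polynomials in $s$, symmetrizes those polynomials, and invokes a separate combinatorial positivity lemma (Lemma~\ref{F(xy)-2-power-lem} on the function $F(x,y)$) to show every coefficient is nonnegative. Your approach is shorter and more robust: it avoids the delicate series bookkeeping entirely, renders Lemma~\ref{F(xy)-2-power-lem} unnecessary, and isolates a clean standalone fact (log-concavity of $h$ on $(0,\infty)$) of independent interest. What the paper's computation buys in exchange is explicit information about the leading terms of $\frac{\td}{\td t}\bigl[\frac{h(st)}{h^s(t)}\bigr]$ near $t=0$, which your argument does not produce but which is not needed for the theorems that follow.
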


\begin{proof}
It is easy to see that
\begin{equation*}
\lim_{t\to0}\frac{h(st)}{h^s(t)}=\frac{\lim_{t\to0}h(st)}{\lim_{t\to0}h^s(t)}=\frac{\frac{1}{2}}{\frac{1}{2^s}}=\frac{1}{2^{1-s}}
\end{equation*}
and
\begin{equation*}
\lim_{t\to\infty}H_s(t)=\frac{\lim_{t\to\infty}h(st)}{\lim_{t\to0}h^s(t)}=\frac{1}{1^s}=1.
\end{equation*}
\par
Direct differentiating and expanding to power series give
\begin{gather*}
\frac{\td}{\td t}\biggl[\frac{h(st)}{h^s(t)}\biggr]
=-\frac{s\te^{(1+s)t}\begin{bmatrix}
(t-2)\te^{(1+2s)t}+(t+2)\te^{2st}+(2-st)\te^{(2+s)t}\\
+4(s-1)t\te^{(1+s)t}-(2st^2+3st+2)\te^{st}\\
-(st+2)\te^{2t}+\bigl(2st^2+3t+2\bigr)\te^t+(s-1)t\end{bmatrix}} {(\te^t-1)^3(\te^{st}-1)^3h^{s+1}(t)}\\
=\frac{s\te^{(1+s)t}\sum\limits_{k=7}^{\infty}
\begin{pmatrix}
(3k+2)s^k+2(2s+1)^k+ks(s+2)^{k-1}\\
+2k^2s^{k-1}+4k(s+1)^{k-1}\\
+2k\bigl(1+2^{k-2}\bigr)s+2^{k+1}\\
-\bigl[2^{k+1}s^k+2(s+2)^k\\
+4ks(s+1)^{k-1}+2k\bigl(1+2^{k-2}\bigr)s^{k-1}\\
+k(2s+1)^{k-1}+2k^2s+3k+2\bigr]
\end{pmatrix}\dfrac{t^k}{k!}}{(\te^t-1)^3(\te^{st}-1)^3h^{s+1}(t)}\\
=\frac{s\te^{(1+s)t}\sum\limits_{k=7}^{\infty}
\begin{pmatrix}
\sum\limits_{m=1}^{k-1}\bigl[k2^{k-m}\binom{k-1}{m-1}+4k\binom{k-1}{m}+2^{m+1}\binom{k}{m}\bigr]s^m\\
+2k\bigl(1-k+2^{k-2}\bigr)\bigl(s-s^{k-1}\bigr)\\
-\sum\limits_{m=1}^{k-1}\big[k2^m\binom{k-1}{m}+4k\binom{k-1}{m-1}+2^{k-m+1}\binom{k}{m}\bigr]s^m
\end{pmatrix}\dfrac{t^k}{k!}}{(\te^t-1)^3(\te^{st}-1)^3h^{s+1}(t)}\\
=\frac{s\te^{(1+s)t}\sum\limits_{k=7}^{\infty}
\begin{pmatrix}
\sum\limits_{m=1}^{k-1}\bigl[k2^{k-m}\binom{k-1}{m-1}+4k\binom{k-1}{m}+2^{m+1}\binom{k}{m}\bigr]\bigl(s^m-s^{k-m}\bigr)\\
+2k\bigl(1-k+2^{k-2}\bigr)\bigl(s-s^{k-1}\bigr)
\end{pmatrix}\dfrac{t^k}{k!}}{(\te^t-1)^3(\te^{st}-1)^3h^{s+1}(t)}\\
=\frac{s\te^{(1+s)t}\sum\limits_{k=7}^{\infty}
\begin{pmatrix}
\sum\limits_{3\le m<\frac{k}{2}}\bigl[k2^{k-m}\binom{k-1}{m-1}-k2^{m}\binom{k-1}{k-m-1}\\
+4k\binom{k-1}{m}-4k\binom{k-1}{k-m}\\
+2^{m+1}\binom{k}{m}-2^{k-m+1}\binom{k}{k-m}\bigr]\bigl(s^m-s^{k-m}\bigr)
\end{pmatrix}\dfrac{t^k}{k!}}{(\te^t-1)^3(\te^{st}-1)^3h^{s+1}(t)}\\
=\frac{s\te^{(1+s)t}\sum\limits_{k=7}^{\infty}
\begin{pmatrix}
2\sum\limits_{3\le m<\frac{k}{2}}\frac{k!}{(m-1)! (k-m-1)!}\bigl[2\bigl(\frac{1}{m}-\frac{1}{k-m}\bigr)\\
+\frac{1}{2}\bigl(\frac{2^{k-m}}{k-m}-\frac{2^m}{m}\bigr) -\frac{2^{k-m}-2^{m}}{(k-m)m}\bigr]\bigl(s^m-s^{k-m}\bigr)
\end{pmatrix}\dfrac{t^k}{k!}}{(\te^t-1)^3(\te^{st}-1)^3h^{s+1}(t)}\\
=\frac{s^4\te^{(1+s)t}
\begin{pmatrix}
\frac{1-s}{36}t^7+\frac{1-s^2}{45}t^8 +\frac{22(1-s^3)+15(s-s^2)}{2160}t^9\\
+\frac{52(1-s^4)+63(s-s^3)}{15120}t^{10}\\
+\frac{285(1-s^5)+470(s-s^4)+238(s^2-s^3)}{302400} t^{11}+\dotsm
\end{pmatrix}}{(\te^t-1)^3(\te^{st}-1)^3h^{s+1}(t)}.
\end{gather*}
Utilizing Lemma~\ref{F(xy)-2-power-lem} reveals that the derivative $\frac{\td}{\td t}\bigl[\frac{h(st)}{h^s(t)}\bigr]$ is positive for $s\in(0,1)$ and $t>0$. Consequently, for $s\in(0,1)$, the ratio $\frac{h(st)}{h^s(t)}$ is increasing in $t>0$.
The proof of Lemma~\ref{h(t)-ratio-mon-lem} is complete.
\end{proof}

\begin{lem}[{\cite[Lemma~2]{Alice-AADM-3137.tex}}]\label{Alice-PolyG-4limits}
For $k\ge0$, the function $(-1)^k\Phi^{(k)}(x)$ is completely monotonic on $(0,\infty)$, with the limits
\begin{equation}\label{lim-to0-to-infnity-unit}
(-1)^{k}x^{k+1}\Phi^{(k)}(x)
\to
\begin{dcases}
k!, & x\to0^+;\\
\frac{k!}{2}, & x\to\infty.
\end{dcases}
\end{equation}
\end{lem}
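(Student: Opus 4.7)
My plan is to derive an integral (Laplace) representation for $\Phi(x)$ and then read off both the complete monotonicity and the two limits from that representation; the integrand that appears is exactly the function $h(t)$ already introduced in Lemma~\ref{h(t)-ratio-mon-lem}, which is a pleasant coincidence that I intend to exploit.

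First, starting from the classical formula
\begin{equation*}
\psi'(x)=\int_0^\infty\frac{t\,\te^{-xt}}{1-\te^{-t}}\td t,
\end{equation*}
I would form $x\psi'(x)$, write $x\te^{-xt}=-\frac{\td}{\td t}\te^{-xt}$, and integrate by parts with $u=\frac{t}{1-\te^{-t}}$, $\td v=x\te^{-xt}\td t$. The boundary term produces exactly the $+1$ needed to cancel the subtracted constant in $\Phi(x)=x\psi'(x)-1$, and the remaining integrand simplifies to $\frac{\te^t(\te^t-1-t)}{(\te^t-1)^2}=h(t)$. This yields the key representation
\begin{equation*}
\Phi(x)=\int_0^\infty h(t)\,\te^{-xt}\td t,\qquad x>0.
\end{equation*}
Since $\te^t-1-t\ge0$, we have $h(t)\ge0$, and differentiation under the integral gives
\begin{equation*}
(-1)^k\Phi^{(k)}(x)=\int_0^\infty t^k h(t)\,\te^{-xt}\td t,
\end{equation*}
which is the Laplace transform of a nonnegative function. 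By Bernstein's theorem, $(-1)^k\Phi^{(k)}(x)$ is completely monotonic on $(0,\infty)$, establishing the first part.

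For the limits in~\eqref{lim-to0-to-infnity-unit}, I would perform the change of variables $u=xt$ to obtain
\begin{equation*}
(-1)^k x^{k+1}\Phi^{(k)}(x)=\int_0^\infty u^k h\!\left(\frac{u}{x}\right)\te^{-u}\td u.
\end{equation*}
Since $h$ is continuous on $[0,\infty)$, bounded (by direct asymptotics $h(0)=\frac12$ and $h(t)\to1$ as $t\to\infty$), and satisfies $\frac12\le h(t)\le1$, the function $u^k h(u/x)\te^{-u}$ is dominated by the integrable $u^k\te^{-u}$ uniformly in $x$. Dominated convergence then lets me pass the limit inside: as $x\to\infty$, $h(u/x)\to h(0)=\frac12$, giving $\frac12\int_0^\infty u^k\te^{-u}\td u=\frac{k!}{2}$, while as $x\to0^+$, $h(u/x)\to1$, giving $\int_0^\infty u^k\te^{-u}\td u=k!$.

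The only place I anticipate any technical care is in the integration by parts that produces the representation of $\Phi(x)$: one has to verify the boundary behaviour of $\frac{t\te^{-xt}}{1-\te^{-t}}$ at $t=0^+$ (where L'Hôpital gives limit $1$) and at $t=\infty$ (where it decays exponentially) to confirm that the boundary term exactly cancels the constant $-1$ in $\Phi(x)$. Everything afterwards—the Bernstein argument and the two dominated-convergence calculations—is routine, which is why this lemma can be (and in fact is) attributed to the cited source.
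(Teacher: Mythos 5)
Your proof is correct. Note, however, that the paper does not prove this lemma at all: it is quoted verbatim from Lemma~2 of the cited reference, so there is no in-paper proof to compare against. What you have written is a legitimate self-contained derivation, and it is entirely consistent with the machinery the paper does use: the representation $\Phi(x)=\int_0^\infty h(t)\te^{-xt}\td t$ that you obtain by integrating by parts in $\psi'(x)=\int_0^\infty\frac{t\te^{-xt}}{1-\te^{-t}}\td t$ is exactly the identity~\eqref{x-psi-prime-d-int}, which the paper attributes to the proof of Theorem~4 of the S\~ao Paulo paper and takes as its starting point for Theorems~\ref{J(k-m-x)-decr-mon-thm} and~\ref{NSC-omega-thm}. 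Your boundary-term computation is right (the term at $t=0$ contributes $+1$ and cancels the $-1$ in $\Phi$), the differentiation under the integral sign gives $(-1)^k\Phi^{(k)}(x)=\int_0^\infty t^kh(t)\te^{-xt}\td t$ with $h\ge0$, and the substitution $u=xt$ plus dominated convergence yields both limits. One small gloss: you justify the bounds $\frac12\le h(t)\le1$ by ``direct asymptotics,'' but the endpoint values $h(0)=\frac12$ and $h(\infty)=1$ alone do not give the two-sided bound; that requires the elementary inequalities $\te^t(1-t)\le1$ and $\te^{2t}-2t\te^t-1\ge0$ for $t\ge0$ (or the monotonicity of $h$). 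For the dominated-convergence step only boundedness of $h$ is needed, and that already follows from continuity on $[0,\infty)$ together with the finite limit at infinity, so the argument is not actually endangered.
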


\begin{lem}[{Bernstein's theorem~\cite[p.~161, Theorem~12b]{widder-1941}}]\label{Theorem12b-widder-p161}
A function $f(x)$ is completely monotonic on $(0,\infty)$ if and only if
\begin{equation}\label{Theorem12b-Laplace}
f(x)=\int_0^\infty \te^{-xt}\td\sigma(t), \quad x\in(0,\infty),
\end{equation}
where $\sigma(s)$ is non-decreasing and the integral in~\eqref{Theorem12b-Laplace} converges for $x\in(0,\infty)$.
\end{lem}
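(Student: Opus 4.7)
The plan separates the two implications. The sufficiency direction is routine: given $f(x) = \int_0^\infty \te^{-xt}\,\td\sigma(t)$ with non-decreasing $\sigma$, the bound $t^n \te^{-xt} \le C_n(x_0) \te^{-x_0 t/2}$ on $[x_0, \infty) \subset (0, \infty)$ justifies $n$-fold differentiation under the Stieltjes integral, producing $(-1)^n f^{(n)}(x) = \int_0^\infty t^n \te^{-xt}\,\td\sigma(t) \ge 0$, which are exactly the defining inequalities for complete monotonicity.

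For the necessity direction, I would build approximating non-decreasing step measures, extract a subsequential limit via Helly's selection theorem, and verify that the Laplace--Stieltjes transform of that limit returns $f$. Concretely, for each $n \in \mathbb{N}$ define the step function $\sigma_n$ on $[0, \infty)$ with a jump of size $\frac{(-1)^k n^k f^{(k)}(n)}{k!}$ at $k/n$; each jump is non-negative by complete monotonicity, so $\sigma_n$ is non-decreasing. The Taylor expansion of $f$ around $n$ (which has radius of convergence at least $n$ because $f$ extends holomorphically to $\{\Re z > 0\}$) then yields the key identity
\begin{equation*}
\int_0^\infty \te^{-xt}\,\td\sigma_n(t) = \sum_{k=0}^\infty \frac{(-1)^k n^k f^{(k)}(n)}{k!}\,\te^{-xk/n} = f\!\bigl(n(1 - \te^{-x/n})\bigr),
\end{equation*}
and since $n(1 - \te^{-x/n}) \to x$ and $f$ is continuous, the right-hand side tends to $f(x)$ as $n \to \infty$.

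Evaluating the same identity at an auxiliary $x_0 \in (0, x)$ supplies both the uniform bound $\sigma_n(T) \le \te^{x_0 T} f\bigl(n(1 - \te^{-x_0/n})\bigr) \le 2\,\te^{x_0 T} f(x_0)$ on every compact $[0, T]$ (for $n$ large) and the tail estimate $\int_T^\infty \te^{-xt}\,\td\sigma_n(t) \le \te^{-(x - x_0)T} f\bigl(n(1 - \te^{-x_0/n})\bigr)$. Helly's selection theorem then extracts a subsequence $\sigma_{n_j}$ converging pointwise at every continuity point of some non-decreasing limit $\sigma$, and combining weak convergence on $[0, T]$ with the uniform tail bound lets one pass to the limit inside the Laplace integral, giving $f(x) = \int_0^\infty \te^{-xt}\,\td\sigma(t)$ for every $x > 0$.

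The main obstacle is controlling mass escape to infinity through the Helly limit: weak convergence of distribution functions only tests against compactly supported continuous functions, so without an extra ingredient one cannot rule out a loss of mass at $\infty$ that would make the limiting Laplace transform strictly smaller than $f$. The remedy is precisely the auxiliary parameter $x_0 > 0$, whose single evaluation of the identity simultaneously bounds $\sigma_n$ on compacts and controls its tail, turning the family $\{\sigma_n\}$ into a tight one. Once tightness is established, everything else is routine bookkeeping.
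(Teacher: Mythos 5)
The paper does not prove this lemma at all: it is imported verbatim as Bernstein's theorem with a citation to Widder's book, so there is no in-paper argument to compare yours against. What you have written is the classical proof of the Hausdorff--Bernstein--Widder theorem. The sufficiency half (differentiation under the Stieltjes integral with the dominating bound $t^n\te^{-xt}\le C_n(x_0)\te^{-x_0t/2}$ on $[x_0,\infty)$) is routine and correct. The necessity half --- the step measures $\sigma_n$ with non-negative jumps $(-1)^kn^kf^{(k)}(n)/k!$ at $k/n$, the identity $\int_0^\infty\te^{-xt}\,\td\sigma_n(t)=f\bigl(n(1-\te^{-x/n})\bigr)$, the single auxiliary evaluation at $x_0\in(0,x)$ giving simultaneously the bound on $\sigma_n(T)$ and the uniform tail estimate, and Helly selection plus Helly--Bray on $[0,T]$ --- is exactly the standard route (Widder's, and Feller's), and your closing paragraph correctly identifies escape of mass to infinity as the only real danger and correctly disposes of it.

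The one step whose justification does not stand as written is the key identity itself. You obtain it by summing the Taylor series of $f$ about $n$ at the point $n(1-\te^{-x/n})$, and you justify convergence by asserting that $f$ extends holomorphically to $\{\Re z>0\}$. That extension is normally deduced \emph{from} the integral representation you are in the middle of proving, so as stated the argument is circular. The gap is genuine but easily repaired by a direct argument: set $g(y)=f(n-y)$, which is absolutely monotonic on $[0,n)$ since $g^{(k)}(y)=(-1)^kf^{(k)}(n-y)\ge0$; Taylor's formula with integral remainder gives, for $0\le y<y'<n$,
\begin{equation*}
R_N(y)=\frac{y^N}{(N-1)!}\int_0^1(1-u)^{N-1}g^{(N)}(uy)\,\td u
\le\Bigl(\frac{y}{y'}\Bigr)^N R_N(y')\le\Bigl(\frac{y}{y'}\Bigr)^N g(y')\longrightarrow0,
\end{equation*}
using that $g^{(N)}$ is non-decreasing. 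Hence the Taylor series of $f$ at $n$ converges to $f(w)$ for every $w\in(0,n)$, which covers the evaluation point $n(1-\te^{-x/n})\in(0,n)$. With that substitution your proof is complete and is, in substance, the proof to which the paper's citation points.
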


\section{Decreasing property}

In this section, we prove that the function $Y_{m,n}(x)$ defined in~\eqref{Y(m-n)-dfn} is decreasing.

\begin{thm}\label{J(k-m-x)-decr-mon-thm}
For $m,n\in\{0\}\cup\mathbb{N}$, the function $Y_{m,n}(x)$ defined in~\eqref{Y(m-n)-dfn} is decreasing in $x$ from $(0,\infty)$ onto the interval $\bigl(-\frac{2(m+n+1)!}{m!n!},-\frac{(m+n+1)!}{m!n!}\bigr)$. Consequently, for $m,n\in\{0\}\cup\mathbb{N}$, the double inequality
\begin{equation}\label{J(m-m-x)-ineq-doub}
-\frac{2(m+n+1)!}{m!n!}<Y_{m,n}(x)<-\frac{(m+n+1)!}{m!n!}
\end{equation}
is valid on $(0,\infty)$ and sharp in the sense that the lower and upper bounds cannot be replaced by any larger and smaller numbers respectively.
\end{thm}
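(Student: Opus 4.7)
The plan is to build a Laplace integral representation for every derivative $\Phi^{(k)}$ that exposes the function $h$ of Lemma~\ref{h(t)-ratio-mon-lem}, then to recognise $Y_{m,n}(x)$ as, up to a sign, a ratio of two Laplace transforms, and finally to drive the monotonicity by combining the convolution theorem (Lemma~\ref{convlotion-thm}), the Laplace-transform monotonicity rule (Lemma~\ref{Laplace-Ratio-Mon}), and the ratio estimate of Lemma~\ref{h(t)-ratio-mon-lem}.

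First I would start from the standard formula $\psi'(x)=\int_0^\infty\frac{t}{1-\te^{-t}}\te^{-xt}\td t$ and integrate by parts once to obtain $\Phi(x)=x\psi'(x)-1=\int_0^\infty g'(t)\te^{-xt}\td t$ with $g(t)=\frac{t}{1-\te^{-t}}$. A short calculation shows $g'(t)=\frac{\te^t(\te^t-1-t)}{(\te^t-1)^2}$, which is exactly the function $h(t)$ of Lemma~\ref{h(t)-ratio-mon-lem}. Differentiating $k$ times under the integral sign then yields the uniform representation $(-1)^k\Phi^{(k)}(x)=\int_0^\infty t^k h(t)\te^{-xt}\td t$, in agreement with Lemma~\ref{Alice-PolyG-4limits}.

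Next I would apply Lemma~\ref{convlotion-thm} to rewrite the denominator as a single Laplace transform, $\Phi^{(m)}(x)\Phi^{(n)}(x)=(-1)^{m+n}\int_0^\infty B(t)\te^{-xt}\td t$ with $B(t)=\int_0^t u^m(t-u)^n h(u)h(t-u)\td u$, so that, setting $A(t)=t^{m+n+1}h(t)$, one has
\begin{equation*}
Y_{m,n}(x)=-\frac{\int_0^\infty A(t)\te^{-xt}\td t}{\int_0^\infty B(t)\te^{-xt}\td t}.
\end{equation*}
Substituting $u=st$ and cancelling $t^{m+n+1}$ in $B/A$ produces
\begin{equation*}
\frac{B(t)}{A(t)}=\int_0^1 s^m(1-s)^n\,\frac{h(st)}{h(t)^s}\cdot\frac{h((1-s)t)}{h(t)^{1-s}}\td s.
\end{equation*}
The decisive algebraic step is the symmetric splitting $h(t)=h(t)^s\cdot h(t)^{1-s}$, which allows Lemma~\ref{h(t)-ratio-mon-lem}, applied separately to the two factors with parameters $s$ and $1-s$, to show that each ratio under the integral is increasing in $t$; the positively weighted integral therefore inherits this monotonicity. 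Lemma~\ref{Laplace-Ratio-Mon}, applied with the roles of the two transforms exchanged, then yields that $\int_0^\infty A\,\te^{-xt}\td t/\int_0^\infty B\,\te^{-xt}\td t$ is increasing in $x$, so $Y_{m,n}(x)$ is decreasing on $(0,\infty)$.

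The boundary values follow by multiplying the numerator and denominator of $Y_{m,n}(x)$ by $x^{m+n+2}=x^{m+1}\cdot x^{n+1}$ and invoking the asymptotics~\eqref{lim-to0-to-infnity-unit}: the limits $k!$ at the origin give $-(m+n+1)!/(m!\,n!)$, while the limits $k!/2$ at infinity produce an extra factor $2$ and yield $-2(m+n+1)!/(m!\,n!)$, matching~\eqref{J(m-m-x)-ineq-doub}; the sharpness of the double inequality is then automatic from monotonicity. The principal obstacle I foresee is exactly the identification $g'(t)=h(t)$ together with the splitting $h(t)=h(t)^sh(t)^{1-s}$: it is this combination that converts Lemma~\ref{h(t)-ratio-mon-lem}, a statement about $h(st)/h(t)^s$ alone, into an assertion compatible with the convolution structure of $\Phi^{(m)}\Phi^{(n)}$.
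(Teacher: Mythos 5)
Your proposal is correct and follows essentially the same route as the paper: the integral representation $\Phi(x)=\int_0^\infty h(t)\te^{-xt}\td t$, the convolution theorem to turn $\Phi^{(m)}\Phi^{(n)}$ into a single Laplace transform, the substitution $u=st$ together with the splitting $h(t)=h^s(t)\,h^{1-s}(t)$ so that Lemma~\ref{h(t)-ratio-mon-lem} applies to each factor, Lemma~\ref{Laplace-Ratio-Mon} for the monotonicity transfer, and the asymptotics~\eqref{lim-to0-to-infnity-unit} for the sharp bounds. The only cosmetic differences are that you re-derive the representation of $\Phi$ by integration by parts (the paper cites it) and parametrize the convolution by $s\in(0,1)$ rather than $v\in(-1,1)$.
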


\begin{proof}
In the proof of~\cite[Theorem~4]{Alice-SPJM.tex}, Qi established that
\begin{equation}\label{x-psi-prime-d-int}
\Phi(x)=\int_{0}^{\infty}h(t)\te^{-xt}\td t.
\end{equation}
Then the ratio $Y_{m,n}(x)$ can be rewritten as
\begin{align*}
Y_{m,n}(x)&=-\frac{\int_{0}^{\infty}t^{m+n+1}h(t)\te^{-xt}\td t}{\int_{0}^{\infty}t^{m}h(t)\te^{-xt}\td t \int_{0}^{\infty}t^{n}h(t)\te^{-xt}\td t}\\
&=-\frac{\int_{0}^{\infty}t^{m+n+1}h(t)\te^{-xt}\td t}
{\int_{0}^{\infty}\bigl[\int_{0}^{t}u^{m}(t-u)^{n}h(u)h(t-u)\td u\bigr]\te^{-xt}\td t},
\end{align*}
where we used Lemma~\ref{convlotion-thm}. Basing on Lemma~\ref{Laplace-Ratio-Mon}, in order to prove decreasing property of $Y_{m,n}(x)$, it suffices to show that the ratio
\begin{equation}\label{mathfrak-Y-m-n(t)}
\mathfrak{Y}_{m,n}(t)=\frac{t^{m+n+1}h(t)}{\int_{0}^{t}u^{m}(t-u)^{n}h(u)h(t-u)\td u}
\end{equation}
is decreasing in $t\in(0,\infty)$. By changing the variable $u=\frac{1+v}{2}t$, the denominator of $\mathfrak{Y}_{m,n}(t)$ becomes
\begin{equation*}
\biggl(\frac{t}{2}\biggr)^{m+n+1} \int_{-1}^{1}(1+v)^{m}(1-v)^{n} h\biggl(\frac{1+v}{2}t\biggr)h\biggl(\frac{1-v}{2}t\biggr)\td v.
\end{equation*}
Accordingly, we obtain
\begin{equation}\label{recipr-mathfrak-Y-m-n(t)}
\begin{aligned}
\frac{1}{\mathfrak{Y}_{m,n}(t)}&=\frac{\int_{-1}^{1}(1+v)^{m}(1-v)^{n} h\bigl(\frac{1+v}{2}t\bigr)h\bigl(\frac{1-v}{2}t\bigr)\td v}{2^{m+n+1}h(t)}\\
&=\frac{1}{2^{m+n+1}}\int_{-1}^{1}(1+v)^{m}(1-v)^{n} \frac{h\bigl(\frac{1+v}{2}t\bigr)h\bigl(\frac{1-v}{2}t\bigr)}{h(t)}\td v\\
&=\frac{1}{2^{m+n+1}}\int_{-1}^{1}(1+v)^{m}(1-v)^{n} \frac{h(st)}{h^s(t)}\frac{h((1-s)t)}{h^{1-s}(t)}\td v,
\end{aligned}
\end{equation}
where $s=\frac{1+v}{2}\in(0,1)$. From Lemma~\ref{h(t)-ratio-mon-lem}, we find that the function
$
\frac{h(st)}{h^s(t)}\frac{h((1-s)t)}{h^{1-s}(t)}
$
is increasing in $t\in(0,\infty)$ for any fixed $s\in(0,1)$. Hence, the function $\mathfrak{Y}_{m,n}(t)$ is decreasing on $(0,\infty)$. Therefore, the function $Y_{m,n}(x)$ for $m,n\in\{0\}\cup\mathbb{N}$ is decreasing on $(0,\infty)$.
\par
Making use of the limits in~\eqref{lim-to0-to-infnity-unit} in Lemma~\ref{Alice-PolyG-4limits} yields
\begin{align*}
Y_{m,n}(x)&=-\frac{(-1)^{m+n+1}x^{m+n+2}\Phi^{(m+n+1)}(x)}{[(-1)^{m}x^{m+1}\Phi^{(m)}(x)] [(-1)^{n}x^{k+m+2}\Phi^{(n)}(x)]}\\
&\to
\begin{dcases}
-\frac{(m+n+1)!}{m!n!}, & x\to0^+;\\
-\frac{2(m+n+1)!}{m!n!}, & x\to\infty.
\end{dcases}
\end{align*}
The proof of Theorem~\ref{J(k-m-x)-decr-mon-thm} is complete.
\end{proof}

\section{Necessary and sufficient conditions of complete monotonicity}
In this section, we discover necessary and sufficient conditions on $\omega_{m,n}$ for the function $\pm(-1)^{m+n+1}\mathcal{Y}_{m,n;\omega_{m,n}}(x)$ defined in~\eqref{Y(m-n)-omega-dfn} to be completely monotonic.

\begin{thm}\label{NSC-omega-thm}
For $m,n\in\{0\}\cup\mathbb{N}$ and $\omega_{m,n}\in\mathbb{R}$,
\begin{enumerate}
\item
if and only if $\omega_{m,n}\le\frac{(m+n+1)!}{m!n!}$, the function $(-1)^{m+n+1}\mathcal{Y}_{m,n;\omega_{m,n}}(x)$ is completely monotonic on $(0,\infty)$;
\item
if and only if $\omega_{m,n}\ge\frac{2(m+n+1)!}{m!n!}$, the function $(-1)^{m+n}\mathcal{Y}_{m,n;\omega_{m,n}}(x)$ is completely monotonic on $(0,\infty)$;
\item
the double inequality~\eqref{J(m-m-x)-ineq-doub} is valid on $(0,\infty)$ and sharp in the sense that the lower and upper bounds cannot be replaced by any larger and smaller numbers respectively.
\end{enumerate}
\end{thm}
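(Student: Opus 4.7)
The plan is to recast $\pm(-1)^{m+n+1}\mathcal{Y}_{m,n;\omega_{m,n}}(x)$ as a single Laplace transform, identify when the resulting kernel has a constant sign, and then conclude by Bernstein's theorem (Lemma~\ref{Theorem12b-widder-p161}). Differentiating \eqref{x-psi-prime-d-int} under the integral sign gives $(-1)^{k}\Phi^{(k)}(x)=\int_0^\infty t^k h(t)\te^{-xt}\td t$ for every $k\ge0$, so the convolution theorem (Lemma~\ref{convlotion-thm}) yields
\[
(-1)^{m+n+1}\mathcal{Y}_{m,n;\omega_{m,n}}(x)
=\int_0^\infty \biggl[\int_0^t u^m(t-u)^n h(u)h(t-u)\td u\biggr]\bigl[\mathfrak{Y}_{m,n}(t)-\omega_{m,n}\bigr]\te^{-xt}\td t,
\]
where $\mathfrak{Y}_{m,n}(t)$ is the ratio from~\eqref{mathfrak-Y-m-n(t)}. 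The sign of the kernel is governed entirely by the bracket, since the convolution factor is positive on $(0,\infty)$.

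For sufficiency, the proof of Theorem~\ref{J(k-m-x)-decr-mon-thm} already gives that $\mathfrak{Y}_{m,n}(t)$ is decreasing on $(0,\infty)$. Rescaling by $u=ts$ produces the clean formula $\mathfrak{Y}_{m,n}(t)=\frac{h(t)}{\int_0^1 s^m(1-s)^n h(ts)h(t(1-s))\td s}$, and dominated convergence together with $\lim_{t\to0^+}h(t)=\tfrac12$ and $\lim_{t\to\infty}h(t)=1$ yields
\[
\lim_{t\to0^+}\mathfrak{Y}_{m,n}(t)=\frac{2(m+n+1)!}{m!n!},\qquad \lim_{t\to\infty}\mathfrak{Y}_{m,n}(t)=\frac{(m+n+1)!}{m!n!}.
\]
Hence $\sup_{t>0}\mathfrak{Y}_{m,n}(t)=\frac{2(m+n+1)!}{m!n!}$ and $\inf_{t>0}\mathfrak{Y}_{m,n}(t)=\frac{(m+n+1)!}{m!n!}$. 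If $\omega_{m,n}\le\frac{(m+n+1)!}{m!n!}$, the bracket $\mathfrak{Y}_{m,n}(t)-\omega_{m,n}$ is non-negative on $(0,\infty)$, so the integrand is non-negative and Bernstein's theorem delivers complete monotonicity of $(-1)^{m+n+1}\mathcal{Y}_{m,n;\omega_{m,n}}(x)$. A sign flip of the same identity handles case~(2): if $\omega_{m,n}\ge\frac{2(m+n+1)!}{m!n!}$, then $\omega_{m,n}-\mathfrak{Y}_{m,n}(t)\ge0$ on $(0,\infty)$, and $(-1)^{m+n}\mathcal{Y}_{m,n;\omega_{m,n}}(x)$ is completely monotonic.

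For necessity, complete monotonicity of either sign forces non-negativity of that sign. The algebraic rewriting
\[
(-1)^{m+n+1}\mathcal{Y}_{m,n;\omega_{m,n}}(x)=(-1)^{m+n}\Phi^{(m)}(x)\Phi^{(n)}(x)\bigl[-Y_{m,n}(x)-\omega_{m,n}\bigr],
\]
combined with the positivity of $(-1)^{m+n}\Phi^{(m)}(x)\Phi^{(n)}(x)$ on $(0,\infty)$ from Lemma~\ref{Alice-PolyG-4limits}, reduces the non-negativity requirement to $\omega_{m,n}\le-Y_{m,n}(x)$ for all $x>0$. Theorem~\ref{J(k-m-x)-decr-mon-thm} gives $\inf_{x>0}(-Y_{m,n}(x))=\frac{(m+n+1)!}{m!n!}$, so $\omega_{m,n}\le\frac{(m+n+1)!}{m!n!}$ is necessary; the parallel argument yields $\omega_{m,n}\ge\frac{2(m+n+1)!}{m!n!}$ in case~(2). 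Part~(3) then falls out immediately by applying (1) and (2) at the respective threshold values of $\omega_{m,n}$; sharpness is inherited from the two limits of $Y_{m,n}$ established in Theorem~\ref{J(k-m-x)-decr-mon-thm}.

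The principal obstacle will be the two limit computations for $\mathfrak{Y}_{m,n}(t)$, since these are what align the Bernstein-side threshold (coming from the kernel sign) with the limit-value-side threshold (coming from Lemma~\ref{Alice-PolyG-4limits} through Theorem~\ref{J(k-m-x)-decr-mon-thm}). The $t\to0^+$ limit is elementary by continuity of $h$ at $0$, while the $t\to\infty$ limit requires a dominated-convergence argument on the rescaled integral $\int_0^1 s^m(1-s)^n h(ts)h(t(1-s))\td s$, using that $h$ is bounded on $(0,\infty)$ and tends to $1$ at infinity pointwise on the open interval $(0,1)$.
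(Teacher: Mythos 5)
Your proposal is correct and follows essentially the same route as the paper: both recast $(-1)^{m+n+1}\mathcal{Y}_{m,n;\omega_{m,n}}(x)$ as a single Laplace transform via \eqref{x-psi-prime-d-int} and Lemma~\ref{convlotion-thm}, reduce the question to the sign of $\mathfrak{Y}_{m,n}(t)-\omega_{m,n}$ for the ratio $\mathfrak{Y}_{m,n}(t)$ from \eqref{mathfrak-Y-m-n(t)}, and conclude with Bernstein's theorem. The only (valid) variations are that you extract the sharp range $\bigl(\frac{(m+n+1)!}{m!n!},\frac{2(m+n+1)!}{m!n!}\bigr)$ of $\mathfrak{Y}_{m,n}(t)$ from its monotonicity together with dominated-convergence limits at $t\to0^+$ and $t\to\infty$, whereas the paper reads it off from the range $\bigl(\frac{1}{2},1\bigr)$ in Lemma~\ref{h(t)-ratio-mon-lem} combined with the Beta integral $\int_{-1}^{1}(1+v)^{m}(1-v)^{n}\td v=2^{m+n+1}\frac{m!n!}{(m+n+1)!}$, and you argue necessity from the pointwise nonnegativity of a completely monotonic function plus the limits of $Y_{m,n}(x)$ in Theorem~\ref{J(k-m-x)-decr-mon-thm} rather than from the sharpness of the bounds on $1/\mathfrak{Y}_{m,n}(t)$.
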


\begin{proof}
As done in the proof of Theorem~\ref{J(k-m-x)-decr-mon-thm}, by virtue of the integral representation~\eqref{x-psi-prime-d-int} and Lemma~\ref{convlotion-thm}, we acquire
\begin{gather*}
\begin{aligned}
(-1)^{m+n+1}\mathcal{Y}_{m,n;\omega_{m,n}}(x)&=\biggl[\int_{0}^{\infty}t^{m+n+1}h(t)\te^{-xt}\td t\\
&\quad-\omega_{m,n}\int_{0}^{\infty}t^{m}h(t)\te^{-xt}\td t\int_{0}^{\infty}t^{n}h(t)\te^{-xt}\td t\biggr]
\end{aligned}\\
\begin{aligned}
&=\int_{0}^{\infty}\biggl[t^{m+n+1}h(t)-\omega_{m,n}\int_{0}^{t}u^{m}(t-u)^{n}h(u)h(t-u)\td u\biggr]\te^{-xt}\td t\\
&=\int_{0}^{\infty}\biggl[1-\frac{\omega_{m,n}}{\mathfrak{Y}_{m,n}(t)}\biggr]t^{m+n+1}h(t)\te^{-xt}\td t,
\end{aligned}
\end{gather*}
where $\mathfrak{Y}_{m,n}(t)$ is defined by~\eqref{mathfrak-Y-m-n(t)} and it has been proved in the proof of Theorem~\ref{J(k-m-x)-decr-mon-thm} to be decreasing on $(0,\infty)$.
From Lemma~\ref{h(t)-ratio-mon-lem}, we conclude that the function $\frac{h(st)}{h^s(t)}\frac{h((1-s)t)}{h^{1-s}(t)}$ is increasing in $t$ from $(0,\infty)$ onto $\bigl(\frac{1}{2},1\bigr)$. Accordingly, by virtue of~\eqref{recipr-mathfrak-Y-m-n(t)}, we arrive at the sharp inequalities
\begin{equation*}
\frac{1}{2^{m+n+2}}\int_{-1}^{1}(1+v)^{m}(1-v)^{n}\td v <\frac{1}{\mathfrak{Y}_{m,n}(t)} <\frac{1}{2^{m+n+1}}\int_{-1}^{1}(1+v)^{m}(1-v)^{n}\td v.
\end{equation*}
Since
\begin{align*}
\int_{-1}^{1}(1+v)^{m}(1-v)^{n}\td v&=\int_{0}^{1}\bigl[(1+v)^{m}(1-v)^{n}+(1-v)^{m}(1+v)^{n}\bigr]\td v\\
&=2^{m+n+1}B(m+1,n+1)\\
&=2^{m+n+1}\frac{m!n!}{(m+n+1)!},
\end{align*}
where we used the formula
\begin{align*}
\int_{0}^{1}\bigl[(1+x)^{\mu-1}(1-x)^{\nu-1}+(1+x)^{\nu-1}(1-x)^{\mu-1}\bigr]\td x
&=2^{\mu+\nu-1}B(\mu,\nu)\\
&=2^{\mu+\nu-1}\frac{\Gamma(\mu)\Gamma(\nu)}{\Gamma(\mu+\nu)}
\end{align*}
for $\Re(\mu),\Re(\nu)>0$ in~\cite[p.~321, 3.214]{Gradshteyn-Ryzhik-Table-8th}, the double inequality
\begin{equation*}
\frac{1}{2}\frac{m!n!}{(m+n+1)!}<\frac{1}{\mathfrak{Y}_{m,n}(t)}
<\frac{m!n!}{(m+n+1)!}
\end{equation*}
is valid and sharp on $(0,\infty)$. Consequently, by virtue of Lemma~\ref{Theorem12b-widder-p161}, if and only if $\omega_{m,n}\le\frac{(m+n+1)!}{m!n!}$, the function $(-1)^{m+n+1}\mathcal{Y}_{m,n;\omega_{m,n}}(x)$ is completely monotonic on $(0,\infty)$; if and only if $\omega_{m,n}\ge\frac{2(m+n+1)!}{m!n!}$, the function $(-1)^{m+n}\mathcal{Y}_{m,n;\omega_{m,n}}(x)$ is completely monotonic on $(0,\infty)$.
\par
The double inequality~\eqref{J(m-m-x)-ineq-doub} follows from complete monotonicity of the functions $\pm(-1)^{m+n+1}\mathcal{Y}_{m,n;\omega_{m,n}}(x)$. The proof of the sharpness of the double inequality~\eqref{J(m-m-x)-ineq-doub} is the same as done in the proof of Theorem~\ref{J(k-m-x)-decr-mon-thm}.
The proof of Theorem~\ref{NSC-omega-thm} is complete.
\end{proof}

\section{Remarks}

In this section, we list several remarks related to our main results and their proofs in this paper.

\begin{rem}
Lemma~\ref{h(t)-ratio-mon-lem} in this paper generalizes a conclusion in~\cite[Lemma~2.3]{Alice-Schur-Mon.tex} which reads that the function $\frac{h(2t)}{h^2(t)}$ is decreasing from $(0,\infty)$ onto $(1,2)$.
\end{rem}

\begin{rem}
The function $F(x,y)$ discussed in Lemma~\ref{F(xy)-2-power-lem} can be reformulated as
\begin{equation*}
F(x,y)=\biggl(\frac{1}{x}-\frac{1}{y-x}\biggr)\Biggl[2-\frac{1}{2}\frac{\frac{2^{y-x}}{y-x}-\frac{2^x}{x}} {\frac{1}{y-x}-\frac{1}{x}} -\frac{2^{y-x}-2^{x}}{(y-x)-x}\Biggr],
\end{equation*}
in which the functions
\begin{equation*}
\frac{\frac{2^{y-x}}{y-x}-\frac{2^x}{x}} {\frac{1}{y-x}-\frac{1}{x}}
\quad\text{and}\quad
\frac{2^{y-x}-2^{x}}{(y-x)-x}
\end{equation*}
can be regarded as special means~\cite{bullenmean, Recipr-Sqrt-Geom-S.tex}.
\par
Let $x,y\in\mathbb{R}$ such that $0<x<\frac{y}{2}$. Motivated by Lemma~\ref{F(xy)-2-power-lem}, we guess that,
\begin{enumerate}
\item
when $2<x<\frac{y}{2}$, the function $F(x,y)$ is positive;
\item
when $y>4$ and $0<x<2$, the function $F(x,y)$ is negative.
\end{enumerate}
Furthermore, one can discuss positivity and negativity of the function $F(x,y)$ for all $x,y$ satisfying $0<x<\frac{y}{2}$.
\end{rem}

\begin{rem}
When taking $m=k$ and $n=k+1$, the double inequality~\eqref{J(m-m-x)-ineq-doub} in Theorem~\ref{J(k-m-x)-decr-mon-thm} becomes the double inequality~\eqref{J(0-m-x)-ineq-doub} guessed by the corresponding and third author in~\cite[Remark~3]{Alice-AADM-3137.tex}.
\end{rem}

\begin{rem}
For $m,n\in\{0\}\cup\mathbb{N}$, direct differentiation gives
\begin{equation*}
Y_{m,n}'(x)=\frac{\Phi^{(m+n+2)}(x)\bigl[\Phi^{(m)}(x)\Phi^{(n)}(x)\bigr] -\Phi^{(m+n+1)}(x)\bigl[\Phi^{(m)}(x)\Phi^{(n)}(x)\bigr]'} {[\Phi^{(m)}(x)\Phi^{(n)}(x)]^2}
\end{equation*}
on $(0,\infty)$. The decreasing monotonicity of $Y_{m,n}(x)$ in Theorem~\ref{J(k-m-x)-decr-mon-thm} implies that, for $m,n\in\{0\}\cup\mathbb{N}$, the inequality
\begin{equation*}
\Phi^{(m+n+1)}(x)\bigl[\Phi^{(m)}(x)\Phi^{(n)}(x)\bigr]'>\Phi^{(m+n+2)}(x)\bigl[\Phi^{(m)}(x)\Phi^{(n)}(x)\bigr],
\end{equation*}
equivalently,
\begin{equation*}
\frac{\bigl[\Phi^{(m)}(x)\Phi^{(n)}(x)\bigr]'}{\Phi^{(m)}(x)\Phi^{(n)}(x)}<\frac{\Phi^{(m+n+2)}(x)}{\Phi^{(m+n+1)}(x)},
\end{equation*}
is valid on $(0,\infty)$.
\par
We guess that, for $m,n\in\{0\}\cup\mathbb{N}$, the function
\begin{equation*}
\Phi^{(m+n+1)}(x)\bigl[\Phi^{(m)}(x)\Phi^{(n)}(x)\bigr]'-\Phi^{(m+n+2)}(x)\bigl[\Phi^{(m)}(x)\Phi^{(n)}(x)\bigr]
\end{equation*}
is completely monotonic in $x\in(0,\infty)$.
\par
One can also consider necessary and sufficient conditions on $\Lambda_{m,n}\in\mathbb{R}$ for $m,n\in\{0\}\cup\mathbb{N}$ such that the function
\begin{equation*}
\Phi^{(m+n+1)}(x)\bigl[\Phi^{(m)}(x)\Phi^{(n)}(x)\bigr]'-\Lambda_{m,n}\Phi^{(m+n+2)}(x)\bigl[\Phi^{(m)}(x)\Phi^{(n)}(x)\bigr]
\end{equation*}
and its opposite are respectively completely monotonic on $(0,\infty)$.
\end{rem}

\begin{rem}
This paper is a revised version of the electronic preprint~\cite{Alice-PolyG-Mon-Conj.tex}, was reported online by Qi between 09:30--09:45 on 21 November 2020 and on the 3rd International Conference on Mathematical and Related Sciences: Current Trends and Developments (ICMRS 2020) held in Turkey, and is the sixth one in a series of works including the articles~\cite{Alice-SPJM.tex, Alice-y-x-curvature.tex, Alice-Phi-CM.tex, Alice-Schur-Mon.tex, Alice-AADM-3137.tex, Alice-y-x-conj-one.tex, Slovaca-6019.tex, Alice-y-x-conj-two.tex, TWMS-20657.tex, Alice-y-x-curv-notes.tex} and the review article~\cite{mathematics-2651178-for-proof.tex}.
\end{rem}

\section{Authors' statements}

\paragraph{\bf Conflict of interest}
The authors declare that they have no any conflict of competing interests.

\paragraph{\bf Funding information}
Not applicable.

\paragraph{\bf Author contribution}
All authors contributed equally to the manuscript and read and approved the final manuscript.

\paragraph{\bf Data availability statement}
Data sharing is not applicable to this article as no new data were created or analyzed in this study.

\paragraph{\bf Ethical approval}
The conducted research is not related to either human or animal use.

\paragraph{\bf Informed consent}
Not applicable.

\paragraph{\bf Acknowledgements}
The authors appreciate anonymous referees for their careful corrections to, helpful suggestions to, and valuable comments on the original version of this paper.

%

\end{document}